\newtheorem{theorem}{Theorem}[section]
\theoremstyle{definition}
\newcommand{\qbinom}[2]{\binom{#1}{#2}_{\!q}}
\title{Bowling ball representations of braid groups}
\author{Stephen Bigelow}
\date{}
\begin{document}

\maketitle

\begin{abstract}
In a remark in his seminal 1987 paper,
Jones describes a way to define the Burau matrix of a positive braid
using a metaphor of bowling a ball
down a bowling alley with braided lanes.
We extend this definition
to allow multiple bowling balls to be bowled simultaneously.
We obtain the Iwahori-Hecke algebra
and a cabled version of the Temperley-Lieb representation.
\end{abstract}

\section{Introduction}

The positive braid monoid $B_n^+$ is the monoid
generated by $\sigma_1, \dots, \sigma_{n-1}$
modulo the following relations.
\begin{description}
\item[Far commutativity:]
     $\sigma_i \sigma_j = \sigma_j \sigma_i$ if $|i-j| > 1$.
\item[The braid relation:]
     $\sigma_i \sigma_j \sigma_i = \sigma_j \sigma_i \sigma_j$
     if $|i - j| = 1$.
\end{description}
Alternatively,
$B_n^+$ is the set of geometric braids with $n$ strands
that involve only positive crossings.

In a remark in \cite{jones},
Jones describes a definition of
the (non-reduced) Burau representation of the positive braid monoid
using a ``bowling ball'' metaphor.
Here is the relevant passage
(except we change ``$t$'' to ``$1-q$'' and $(i,j)$ to $(j,i)$,
to match our conventions).

\begin{quote}
For positive braids there is also a mechanical interpretation of the Burau
matrix: Lay the braid out flat and make it into a bowling alley with $n$ lanes,
the lanes going over each other according to the braid. If a ball traveling
along a lane has probability $1 - q$ of falling off the top lane (and continuing
in the lane below) at every crossing then the $(j, i)$ entry of the (non-reduced)
Burau matrix is the probability that a ball bowled in the $i$th lane will end
up in the $j$th.
\end{quote}

This approach was generalized to string links in \cite{lintianwang}.
Subsequent papers,
for example
\cite{klw}, \cite{linwang}, and \cite{armond},
have pursued the related idea of
random walks on braids and knot diagram.
Our goal is to generalize the bowling ball definition
to allow several balls to be bowled simultaneously.
We obtain the Iwahori-Hecke algebra
and a cabled version of the Temperley-Lieb representation.

Throughout the paper,
we work over an arbitrary field
containing an element $q$.
The probability metaphor only makes literal sense
when $q$ is a real number in the range $[0,1]$.
However the results are true for any value of $q$,
and even over a ring.

\section{Definition of the representation}

Fix $N \ge 1$.
Let $\beta$ be a positive braid,
thought of as a bowling alley with $n$ lanes.
Simultaneously bowl balls into the lanes
so that each lane receives at most $N$ balls.

At each crossing,
some balls may fall,
according to the following rule.
Suppose $a$ balls arrive on the top lane of a crossing,
and $b$ arrive on a bottom.
If $a \le b$ then no balls will fall.
If $a > b$ then,
with probability $1 - q$,
exactly $a - b$ balls will fall from the top lane
to join the $b$ balls on the lane below.

Use this to define a matrix $\rho(\beta)$
whose rows and columns are indexed
by $n$-tuples $\mathbf{u} = (u_1, \dots, u_n)$
of integers such that $0 \le u_i \le N$.
The $(\mathbf{v},\mathbf{u})$ entry of $\rho(\beta)$
is the probability that,
if $u_i$ balls are bowled into the $i$th lane for all $i$,
then $v_j$ balls end up in the $j$th lane for all $j$.

\begin{theorem}
$\rho$ is a well-defined representation of $B_n^+$.
\end{theorem}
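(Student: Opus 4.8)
The plan is to rephrase the bowling-ball rule as a single local operator, reduce the whole statement to the braid relation for that operator, and then verify that relation by a finite case check.

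First I would make the definition into linear algebra. Let $V$ be the $(N+1)$-dimensional space with basis $e_0,\dots,e_N$, and identify the span of the configurations $\mathbf u$ with $V^{\otimes n}$ via $e_{\mathbf u}=e_{u_1}\otimes\cdots\otimes e_{u_n}$. Reading off the crossing rule---and remembering that a crossing also exchanges the two lanes' positions---gives a single operator $R\in\operatorname{End}(V\otimes V)$ with $R(e_a\otimes e_b)=e_b\otimes e_a$ when $a\le b$ and $R(e_a\otimes e_b)=(1-q)\,e_a\otimes e_b+q\,e_b\otimes e_a$ when $a>b$; with the opposite crossing convention one instead gets its conjugate by the flip, which works equally well. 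Set $\rho(\sigma_i)$ to be $R$ acting on tensor factors $i$ and $i+1$ (identity on the others), and let $\rho$ of a positive word be the product of the corresponding operators. Because the fall/no-fall events at distinct crossings are independent, the entries of this product are exactly the transition probabilities of the theorem, so the bowling-ball matrix of $\beta$ equals this product. Consequently $\rho$ is well defined on $B_n^+$---independent of the word chosen for a diagram, and of the diagram chosen for $\beta$---exactly when $\rho$ is invariant under the two defining relations of $B_n^+$; and once that holds, multiplicativity is automatic, since matrix multiplication is associative and concatenation of words is multiplication in $B_n^+$. (Reading the alley from one end makes this an anti-homomorphism; that is fixed cosmetically by transposing, or by precomposing with the word-reversing anti-automorphism of $B_n^+$.)

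Next I would dispatch far commutativity and reduce the braid relation. If $|i-j|>1$ then $\rho(\sigma_i)$ and $\rho(\sigma_j)$ act on disjoint tensor factors and hence commute, so far commutativity is immediate. For the braid relation, tensoring with identities reduces $\rho(\sigma_i)\rho(\sigma_{i+1})\rho(\sigma_i)=\rho(\sigma_{i+1})\rho(\sigma_i)\rho(\sigma_{i+1})$ to the three-factor identity $R_{12}R_{23}R_{12}=R_{23}R_{12}R_{23}$ on $V^{\otimes 3}$, where $R_{12}=R\otimes I$ and $R_{23}=I\otimes R$. Applied to a basis vector $e_a\otimes e_b\otimes e_c$, both sides stay inside the span of the (at most six) reorderings of that vector, and since $R$ only ever compares its two inputs by ``$\le$'' versus ``$>$'', the coefficients that appear depend only on the relative order of $a,b,c$. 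So it suffices to check the identity for the finitely many order types of a triple---equivalently, once and for all in the case $N=2$.

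The hard part, such as it is, is this last verification---genuine bookkeeping rather than an idea. I would run through the order types of $(a,b,c)$ (the six strict orderings, the cases with exactly one tie, and the all-equal case) and in each expand the two triple products on the relevant basis vectors and compare coefficients; the quadratic identity $(R-1)(R+q)=0$, immediate from the $2\times 2$ blocks $\langle e_a\otimes e_b,\ e_b\otimes e_a\rangle$, is useful for collapsing intermediate terms. Alternatively, one may recognize $R$---up to normalization and a diagonal change of basis---as the Drinfeld--Jimbo $R$-matrix of the vector representation of $U_q(\mathfrak{gl}_{N+1})$, a classical solution of the braid relation, and cite that instead. In either case, once both relations hold, $\rho$ descends to a well-defined representation of $B_n^+$, which is the assertion.
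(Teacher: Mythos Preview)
Your proposal is correct, but it proceeds along a genuinely different line from the paper. You package the crossing rule as an explicit operator $R\in\operatorname{End}(V\otimes V)$, reduce well-definedness to the Yang--Baxter identity $R_{12}R_{23}R_{12}=R_{23}R_{12}R_{23}$, observe that the action on $e_a\otimes e_b\otimes e_c$ depends only on the order type of $(a,b,c)$, and then either run the finite check or recognise $R$ as (a renormalisation of) the $U_q(\mathfrak{gl}_{N+1})$ braiding on the vector representation. The paper never introduces $R$ or the tensor formalism; it stays entirely inside the bowling metaphor and, for the half-twist on three strands, computes each transition probability by an argument that is \emph{visibly} the same for $\sigma_1\sigma_2\sigma_1$ and $\sigma_2\sigma_1\sigma_2$ (e.g.\ ``the probability the ball remains among the top two lanes is $q$, since it passes over the bottom lane exactly once either way''), splitting into cases by the number of balls bowled. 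Both proofs ultimately rest on the same reduction---your ``only the order type matters, so check $N=2$'' is exactly the paper's Case~4---but your route is more systematic, connects the result to standard quantum-group machinery, and offers a one-line citation alternative; the paper's route is more elementary and self-contained, and by computing probabilities in a word-independent way it sidesteps the bookkeeping of expanding two triple products and matching coefficients.
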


\begin{proof}
The definition of $\rho$ clearly respects
multiplication,
and the far commutativity relation.
It remains to check the braid relation.

For convenience,
we can assume $n = 3$.
Let
$$\beta = 
\sigma_1 \sigma_2 \sigma_1 = 
\sigma_2 \sigma_1 \sigma_2.$$
Call the three lanes the top, middle, and bottom.
The top lane crosses over both other lanes,
and the bottom lane crosses under both other lanes.

We will compute the entries of the matrix $\rho(\beta)$
in a way that does not depend on the specific word
used to represent $\beta$.
This will show that $\rho(\beta)$ is well-defined.

\textbf{Case 0: No balls.}

If no balls are bowled in to $\beta$
then no balls will emerge.

\textbf{Case 1: One ball.}

Suppose one ball is bowled into $\beta$.
If it is bowled into one of the lower two lanes
then the top lane plays no role,
so we can simply use the probabilities
for a single crossing between the lower two lanes.

Now suppose the ball is bowled into the top lane.
The probability that it will end up in the top lane is $q^2$,
since it must pass over two empty lanes.
The probability that it will end up among
the top two lanes is $q$,
since it must pass over the bottom lane exactly once,
regardless of whether or not it falls
to the middle lane.
By subtraction,
the probability that it will end up 
in the middle lane is $q - q^2$,
and the probability that it will end up 
in the bottom lane is $1 - q$.

\textbf{Case 2: Two balls.}

Suppose two balls are bowled into $\beta$.
If they are bowled into the same lane
then they behave as a single ball,
which was covered in the Case 1.
If one of them is bowled into the bottom lane
then the bottom lane plays no role,
so we can simply use the probabilities
for a single crossing between the upper two lanes.

Now suppose the two balls are bowled into
the top and the middle lanes.
The probability that the bottom lane ends up empty
is $q^2$,
since the two balls must pass over the bottom lane.
The probability that one of the lower two lanes ends up empty
is $q$,
since the top ball must pass over the empty lane.
By subtraction,
the probability that the middle lane will end up empty 
is $q - q^2$,
and the probability that the top lane will end up empty 
is $1 - q$.

\textbf{Case 3: Three balls.}

Suppose three balls are bowled into $\beta$.
If they are bowled into the same lane
then they behave as a single ball,
which was covered in Case 1.
If they are bowled one into each lane
then no balls will fall.

Now suppose one ball is bowled into one lane
and two balls are bowled into another.
We can compute the probability that any given lane ends up empty
by ignoring the distinction between
having one or two balls in a lane,
and proceeding as one ball had been bowled into each of two lanes.
Similarly,
we can compute the probability that any given lane
ends up with two balls
by ignoring the distinction between
having one or zero balls in a lane,
and proceeding as if one ball had been bowled in.

Consider the six possible outcomes in the following order:
$$(0,1,2), (0,2,1), (1,2,0), (2,1,0), (2,0,1), (1,0,2).$$
Any pair of consecutive terms in this list
represents either
the two outcomes that have $0$ in a given position
or the two outcomes that have $2$ in a given position.
This means we have computed the sum of the probabilities of
any two consecutive outcomes in the list.
It remains to compute the probability of
any one of the possible outcomes.

Consider the outcome $(0,1,2)$,
that is,
two balls in the top lane and one in the middle.
If the input is $(2,1,0)$
then the probability of the outcome $(0,1,2)$
is $q^3$,
since there are three crossings
at which a larger number of balls
must pass over a smaller number
without falling.
If the input is any other permutation of $(2,1,0)$
then the probability of the outcome $(0,1,2)$
is $0$,
since balls cannot fall up.
We can now deduce the probability of any outcome
for any given input of three bowling balls.

\textbf{Case 4: The general case.}

Suppose $a$, $b$ and $c$ balls
are bowled into the lanes.
The only thing that matters
about the numbers $a$, $b$ and $c$
is which equalities and inequalities hold between them.
Thus we can reduce to one of the cases we have already covered.

In every case,
for any given input
we can compute the probability of any given output.
Our computation applies equally well to
$\sigma_1 \sigma_2 \sigma_1$ and
$\sigma_2 \sigma_1 \sigma_2$,
so these have the same matrix.
\end{proof}

\section{The Iwahori-Hecke algebra}

Let $\rho$ be the representation of $B_n^+$
defined in the previous section.
The Iwahori-Hecke algebra $H_n(q)$ is the monoid algebra
of formal linear combinations of positive braids
modulo the quadratic relations
$$(q + \sigma_i)(1 - \sigma_i) = 0.$$

\begin{theorem}
$\rho$ factors through $H_n(q)$.
\end{theorem}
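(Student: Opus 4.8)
The plan is to verify that each generator acts by an operator satisfying the Hecke quadratic relation. Since $\rho$ is already a monoid homomorphism on $B_n^+$ (by the first theorem), it extends to an algebra homomorphism on the monoid algebra of positive braids, and this homomorphism descends to $H_n(q)$ exactly when
$$(q + \rho(\sigma_i))(1 - \rho(\sigma_i)) = 0, \qquad\text{equivalently}\qquad \rho(\sigma_i)^2 = (1-q)\,\rho(\sigma_i) + q\, I,$$
for every $i$. The first simplification I would make is to reduce to $n = 2$: at the crossing $\sigma_i$ only balls in lanes $i$ and $i+1$ can fall, while balls in every other lane pass straight through, so on the tuple-indexed basis $\rho(\sigma_i)$ is a tensor product of identity operators on the coordinates $u_j$ for $j \ne i, i+1$ with a single ``two-lane'' crossing matrix $M$ on the coordinates $(u_i, u_{i+1})$. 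Consequently $(q + \rho(\sigma_i))(1 - \rho(\sigma_i))$ is $(q+M)(1-M)$ tensored with identities, and it suffices to check $(q + M)(1 - M) = 0$.

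Next I would write $M$ down explicitly. Let $e_{(a,b)}$ denote the basis vector recording $a$ balls in lane $1$ (the over-strand at the crossing) and $b$ in lane $2$, with $0 \le a, b \le N$. Because a positive crossing interchanges the two lane positions, and because balls never fall upward, the falling rule gives
$$M\, e_{(a,b)} = \begin{cases} e_{(b,a)} & \text{if } a \le b, \\ (1-q)\, e_{(a,b)} + q\, e_{(b,a)} & \text{if } a > b. \end{cases}$$
The feature to record is that $M$ preserves each (at most) two-dimensional subspace spanned by $e_{(a,b)}$ and its transpose $e_{(b,a)}$, and that the transposed configuration is stable: once the over-strand carries no more balls than the under-strand, applying $M$ again merely swaps the lanes back and nothing falls.

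Finally I would apply $M$ a second time on each such subspace. When $a = b$ the relation is immediate. When $a < b$ one gets $M^2 e_{(a,b)} = M\, e_{(b,a)} = q\, e_{(a,b)} + (1-q)\, e_{(b,a)} = (1-q)\, M e_{(a,b)} + q\, e_{(a,b)}$. When $a > b$, two applications of the displayed formula give $M^2 e_{(a,b)} = (1 - q + q^2)\, e_{(a,b)} + q(1-q)\, e_{(b,a)}$, and one checks directly that this equals $(1-q)\, M e_{(a,b)} + q\, e_{(a,b)}$. Hence $(q + M)(1-M) = 0$, so $(q + \rho(\sigma_i))(1 - \rho(\sigma_i)) = 0$ for all $i$, and $\rho$ factors through $H_n(q)$. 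I do not expect a genuine obstacle: the only thing requiring care is the bookkeeping of a positive crossing (the lane swap combined with the irreversibility of falling), and it is precisely this ``one-step'' stability that forces the relation to be quadratic rather than of higher degree.
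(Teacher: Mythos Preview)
Your argument is correct and follows essentially the same route as the paper: reduce to $n=2$, observe that on each two-dimensional span $\{e_{(a,b)},e_{(b,a)}\}$ the crossing acts either trivially (when $a=b$) or by the Burau $2\times 2$ matrix, and then verify the quadratic relation there. The only difference is that the paper invokes the Burau identity as ``well known'' whereas you carry out the $2\times 2$ computation explicitly.
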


\begin{proof}
Only two lanes are involved,
so it suffices to treat the case $n = 2$.
Let $\mathbf{v}$ be the vector corresponding
to $(a, b)$.
We must show that $\mathbf{v}$ is in the kernel of
$$(q + \rho(\sigma_1))(1 - \rho(\sigma_1)).$$

If $a = b$ then $\mathbf{v}$ is fixed by $\rho(\sigma_1)$,
and we are done.
If $a \neq b$
then the action of $\rho(\sigma_1)$ on
the the vectors corresponding to $(a, b)$ and $(b, a)$
is the same as the Burau representation.
It is well known,
and easily checked,
that this satisfies the required quadratic relation.
\end{proof}

Note that if $q$ is invertible
then the $\sigma_i$ are invertible in $H_n(q)$,
with
$$\sigma_i^{-1} = q^{-1}(\sigma_i + q - 1).$$
In this case,
$\rho$ is a representation of the braid group $B_n$
and not just $B_n^+$.

There is another important element of the kernel
in the case $n \ge N + 2$.
Fix $k$ with $1 \le k \le n - N - 1$.
Suppose $w$ is a
permutation of $\{k, \dots, k + N + 1\}$.
Let $\operatorname{sgn}(w) = \pm 1$
denote the sign of the permutation.
Let $\beta_w$ denote the unique positive braid
with a minimal number of crossings
such that the lane at position $i$
goes to position $w(i)$ for all $i = k, \dots, k + N + 1$.
Let $x_k$ be the following element of $H_n(q)$.
$$x_k = \sum_w \operatorname{sgn}(w) \beta_w.$$

A generalization of $x_k$ appears in
the definition of the Specht modules in
\cite{dipperjames}.
The quotient of $H_n(q)$
by the elements $x_k$
is the representation of $B_n$
corresponding to the Lie group
$\operatorname{SL}(N + 1)$,
as defined in \cite{reshetikhinturaev}.
If $N = 1$ then it is the Temperley-Lieb algebra.

We will need the following property of $x_k$.
\begin{equation} \label{eq:factor}
x_k = \left(
\sum_{w(i) < w(i + 1)} \operatorname{sgn}(w) \beta_w
\right)
(1 - \sigma_i),
\end{equation}
for all $i = k, \dots, k + N$.

\begin{theorem}
$\rho(x_k) = 0$.
\end{theorem}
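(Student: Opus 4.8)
The plan is to evaluate $\rho(x_k)$ on each standard basis vector $\mathbf{e}_{\mathbf{u}}$, where $\mathbf{u}=(u_k,\dots,u_{k+N+1})$ with $0\le u_i\le N$; only the $N+2$ lanes numbered $k,\dots,k+N+1$ are involved, so the remaining coordinates play no role and can be suppressed. The engine is the factorisation~\eqref{eq:factor}: writing $y_i=\sum_{w(i)<w(i+1)}\operatorname{sgn}(w)\beta_w$, we have $\rho(x_k)=\rho(y_i)\rho(1-\sigma_i)$ for every $i=k,\dots,k+N$, so it is enough to understand $\rho(1-\sigma_i)\mathbf{e}_{\mathbf{u}}$, which depends only on the single crossing $\sigma_i$. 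From the bowling rule at one crossing (with the lane in position $i$ the over-strand of $\sigma_i$): $\rho(\sigma_i)\mathbf{e}_{\mathbf{u}}=\mathbf{e}_{\mathbf{u}}$ if $u_i=u_{i+1}$; $\rho(\sigma_i)\mathbf{e}_{\mathbf{u}}=\mathbf{e}_{s_i\mathbf{u}}$ if $u_i<u_{i+1}$; and $\rho(\sigma_i)\mathbf{e}_{\mathbf{u}}=q\,\mathbf{e}_{s_i\mathbf{u}}+(1-q)\mathbf{e}_{\mathbf{u}}$ if $u_i>u_{i+1}$, where $s_i\mathbf{u}$ interchanges coordinates $i$ and $i+1$. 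Hence $\rho(1-\sigma_i)\mathbf{e}_{\mathbf{u}}$ equals $0$, $\mathbf{e}_{\mathbf{u}}-\mathbf{e}_{s_i\mathbf{u}}$, or $q(\mathbf{e}_{\mathbf{u}}-\mathbf{e}_{s_i\mathbf{u}})$ respectively.

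This yields two reductions. First, if $u_i=u_{i+1}$ for some $i\in\{k,\dots,k+N\}$, then $\rho(x_k)\mathbf{e}_{\mathbf{u}}=\rho(y_i)\cdot 0=0$. Second, if $u_i>u_{i+1}$ for some such $i$, then on one hand $\rho(x_k)\mathbf{e}_{\mathbf{u}}=q\,\rho(y_i)(\mathbf{e}_{\mathbf{u}}-\mathbf{e}_{s_i\mathbf{u}})$, while on the other hand $(s_i\mathbf{u})_i<(s_i\mathbf{u})_{i+1}$ gives $\rho(x_k)\mathbf{e}_{s_i\mathbf{u}}=\rho(y_i)(\mathbf{e}_{s_i\mathbf{u}}-\mathbf{e}_{\mathbf{u}})$; comparing, $\rho(x_k)\mathbf{e}_{\mathbf{u}}=-q\,\rho(x_k)\mathbf{e}_{s_i\mathbf{u}}$, and $s_i\mathbf{u}$ has strictly fewer inversions than $\mathbf{u}$. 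Notice that $\rho(y_i)$ is never computed; it simply cancels, which is why the explicit description of $x_k$ is not needed beyond~\eqref{eq:factor}.

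I would then finish by induction on the number of inversions of the sequence $\mathbf{u}$. If $\mathbf{u}$ has an adjacent equality we are done by the first reduction; if it has an adjacent descent, the second reduction passes to a sequence with strictly fewer inversions and we invoke the inductive hypothesis; and if it has neither, then $\mathbf{u}$ is strictly increasing, which is impossible because a strictly increasing sequence of $N+2$ integers cannot lie in $\{0,\dots,N\}$. Hence $\rho(x_k)\mathbf{e}_{\mathbf{u}}=0$ for every $\mathbf{u}$, so $\rho(x_k)=0$. Note that nothing here requires $q$ to be invertible or $\ne-1$, so the argument is valid over any ring.

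The part that needs the most care is precisely this pigeonhole endgame. The naive hope is that~\eqref{eq:factor} disposes of $\mathbf{e}_{\mathbf{u}}$ the moment $\mathbf{u}$ has a repeated value, but a repeat need not occur at adjacent positions, and one genuinely needs the descent-swapping move to migrate a repeat into adjacent position while keeping the inversion count under control, so that the induction terminates and the only potential obstruction — a strictly increasing configuration — is excluded by counting. I would also want to double-check the single-crossing computation of $\rho(\sigma_i)$ against the over/under convention fixed in Section~2 (balls fall exactly at descents of $\mathbf{u}$); with the opposite convention the same argument runs verbatim with ``increasing'' and ``decreasing'' interchanged.
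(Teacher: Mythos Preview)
Your proof is correct and follows essentially the same strategy as the paper's: both use the factorisation~\eqref{eq:factor} to kill basis vectors with an adjacent repeat, invoke pigeonhole on $N+2$ values in $\{0,\dots,N\}$, and propagate via the relation $\rho(x_k)\mathbf{e}_{\mathbf{u}}=-q\,\rho(x_k)\mathbf{e}_{s_i\mathbf{u}}$ (which the paper phrases as the Hecke-algebra identity $x\sigma_j=-qx$). The only organisational difference is that the paper reaches an arbitrary $\mathbf{u}$ from its sorted rearrangement in one step via $\rho(\beta_w)$, whereas you do it one adjacent transposition at a time by induction on inversions.
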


\begin{proof}
It suffices to treat the case $n = N + 2$ and $k = 1$.
Let $x = x_1$.

Suppose we bowl balls into the lanes
so that the number of balls increases
from left to right.
That is,
consider a basis vector $\mathbf{v}$ corresponding to
bowling $v_i$ balls into the $i$th lane,
where
$$0 \le v_1 \le \dots \le v_{N + 2} \le N.$$
Then $v_i = v_{i + 1}$ for some $i$.
Thus $\rho(\sigma_i)$ fixes $\mathbf{v}$.
By Equation \eqref{eq:factor},
$$\rho(x)(\mathbf{v}) = 0.$$

Equation \eqref{eq:factor} also implies
$x \sigma_j = -q x$ for all generators $\sigma_j$.
By induction,
$$\rho(x) \rho(\beta)(\mathbf{v}) = 0$$
for all braids $\beta$.
In particular,
consider $\beta_w$ for a permutation $w$.
If we bowl balls into $\beta_w$
as prescribed by $\mathbf{v}$
then no balls will fall,
since a smaller number passes over a larger number
at every crossing.
The effect is to permute the terms $v_i$.

Every basis vector
is some such permutation of a vector where
the number of balls increases from left to right.
Thus every basis vector is in the kernel of $\rho(x)$.
\end{proof}

\section{A cabling of the Temperley-Lieb algebra}

Fix $K \ge 1$.
Let $\beta$ be a positive braid,
thought of as a bowling alley with $n$ lanes.
Create a {\em cabling} of $\beta$
by replacing every lane with $K$ parallel lanes.
Suppose $\mathbf{a} = (a_1,\dots,a_n)$
is an $n$-tuple of integers such that $0 \le a_i \le K$.
Bowl balls into the lanes so that
each lane gets at most one ball and,
for all $i$,
$a_i$ balls go into the $i$th collection of $K$ parallel lanes.
Whenever a ball passes over an empty lane,
it falls with probability $1 - q$.

Use this to define a matrix $\rho_K(\beta)$
whose rows and columns are indexed
by $n$-tuples $(a_1, \dots, a_n)$
of integers such that $0 \le a_i \le K$.
The $(\mathbf{b},\mathbf{a})$ entry of $\rho_K(\beta)$
is the probability that,
if $a_i$ balls are bowled into the $i$th set of $N$ parallel lanes
for all $i$,
then $b_i$ balls end up in the $i$th set of $N$ parallel lane
for all $i$.

\begin{theorem}
$\rho_K$ is a well-defined representation of $B_n^+$.
\end{theorem}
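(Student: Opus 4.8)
The plan is to deduce the statement from the theorem of Section~2, applied with $N=1$, by recognizing the cabled bowling process as ordinary bowling on a larger braid. Write $\rho^{(1)}$ for the representation $\rho$ of Section~2 specialized to $N=1$, so that each lane carries at most one ball and a ball passing over an empty lane falls with probability $1-q$ --- exactly the rule used in the definition of $\rho_K$. For a positive braid $\beta$ on $n$ strands, let $c_K(\beta)$ be its $K$-cabling, the positive braid on $Kn$ strands obtained by replacing each lane by $K$ parallel lanes. Since isotopic braids have isotopic $K$-cablings, $\beta\mapsto c_K(\beta)$ is a well-defined monoid homomorphism $c_K\colon B_n^+\to B_{Kn}^+$ (it respects far commutativity trivially, and the braid relation because the $K$-cablings of $\sigma_i\sigma_{i+1}\sigma_i$ and $\sigma_{i+1}\sigma_i\sigma_{i+1}$ are isotopic; alternatively this is a routine word manipulation). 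Hence $\rho^{(1)}\circ c_K$ is, by the theorem of Section~2, automatically a well-defined representation of $B_n^+$ on the space with basis $\{0,1\}^{Kn}$.

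Let $\phi\colon\{0,1\}^{Kn}\to\{0,\dots,K\}^n$ record how many of the $K$ lanes in each of the $n$ blocks are occupied. For a configuration $\mathbf a\in\{0,\dots,K\}^n$ of block occupancies, choose any $\mathbf a'\in\{0,1\}^{Kn}$ with $\phi(\mathbf a')=\mathbf a$; bowling balls into $c_K(\beta)$ from $\mathbf a'$ is precisely the cabled bowling process that defines $\rho_K$, and the probability that the output has block occupancies $\mathbf b$ is $\sum_{\phi(\mathbf b')=\mathbf b}\rho^{(1)}(c_K\beta)_{\mathbf b',\mathbf a'}$. Thus $\rho_K$ is well defined exactly when the following claim holds: for every $\beta$, this sum is independent of the choice of $\mathbf a'$ --- equivalently, the distribution of block occupancies of the output of $c_K(\beta)$ depends only on the block occupancies of the input, not on which lanes within each block are used. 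Granting the claim, $\rho_K$ is defined by these sums, and since the grouping $\phi$ is compatible with matrix multiplication whenever each factor already respects it, the grouped matrix of a product of generators equals the product of the grouped matrices; hence $\rho_K$ is multiplicative and its entries are the asserted probabilities.

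It suffices to prove the claim for a single cabled generator $c_K(\sigma_i)$, as it then propagates to all positive braids by the same compatibility with multiplication; and since $c_K(\sigma_i)$ affects only blocks $i$ and $i+1$ we may take $n=2$. In $c_K(\sigma_1)$ the $K$ lanes of the top block never cross one another, and likewise those of the bottom block; each top lane crosses each bottom lane in exactly one crossing, with the top lane passing over, and $c_K(\sigma_1)$ can be written so that these $K^2$ crossings are performed one top lane at a time, each top lane crossing the bottom lanes in a fixed order. A ball on a top lane therefore scans the bottom lanes in that order, falling onto each still-empty one with probability $1-q$ and otherwise continuing; once it has fallen it rests on a bottom lane, and a top ball that scans all the way through stays on its top lane and, since top lanes never cross, plays no further role. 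Consequently, when $a$ and $b$ balls are bowled into the top and bottom blocks, the process is the Markov chain whose state is the number $g$ of still-empty bottom lanes: one starts with $g=K-b$ and processes the $a$ top balls in turn, each sending $g\mapsto g-1$ with probability $1-q^{g}$ and fixing $g$ with probability $q^{g}$. The resulting block occupancies are $K-g$ and $a+b-K+g$, hence depend only on $a$, $b$, and the final value of $g$, whose distribution depends only on $a$ and $b$; this proves the claim. (The transition probabilities have closed formulas involving Gaussian binomial coefficients $\qbinom{m}{r}$, which motivates that notation, though we will not need them.)

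The main obstacle will be the claim above, and in particular the observation that the $K^2$ crossings of a cabled generator are organized so that the ball dynamics collapses to a one-variable Markov chain on the number of empty lanes in a block. The remaining ingredients --- well-definedness of $\rho^{(1)}$, which is the theorem of Section~2, and the fact that $c_K$ is a monoid homomorphism --- are standard.
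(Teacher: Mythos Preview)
Your proof is correct, and the reduction to a single cabled crossing is the same pivot the paper makes, but you reach the conclusion by a somewhat different and more thorough route. The paper's argument is very short: it simply observes that at a cabled crossing the empty upper lanes stay empty and the occupied lower lanes stay occupied, so the only relevant data are the number $a$ of occupied upper lanes and the number $K-b$ of empty lower lanes; the braid relations are never mentioned, being implicitly absorbed into the fact that the underlying $N=1$ bowling process is geometric. You make that implicit step explicit by factoring $\rho_K$ as a lumping of $\rho^{(1)}\circ c_K$, invoking the cabling homomorphism and Theorem~2.1 to get the relations for free, and then checking that the lumping is compatible with products. Your Markov-chain description (state $g=$ number of empty lower lanes, each top ball decrements $g$ with probability $1-q^{g}$) is a more concrete justification of the paper's assertion that only $a$ and $K-b$ matter, and in fact anticipates the computation of $f^a_b(c)$ in the next theorem. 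The paper's version is quicker; yours is more self-contained and leaves less to the reader.
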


\begin{proof}
Our definition only keeps track of the number of balls
in each collection of $K$ parallel lanes.
We must check that it is not necessary to know
precisely which lanes they are in.

Consider the cabling of a single positive crossing.
Suppose we bowl $a$ balls into
the upper $K$ lanes of a cabled crossing,
and $b$ into the lower $K$ lanes.
The empty lanes in the upper lanes will remain empty,
and the balls in the lower lanes will remain there.
The probability that exactly $c$ balls will fall
depends only on the number $a$ of occupied upper lanes
and the number $K - b$ of empty lower lanes.
\end{proof}

One motivation for studying $\rho_K$
would be to use it to compute the colored Jones polynomial of a knot.
Other apparently similar approaches to the colored Jones polynomial
have appeared in \cite{linwang} and \cite{armond}.
It would be interesting to know something about
the limiting behavior of $\rho_K$
if we set $q = e^{2 i \pi/K}$
and let $K$ go to infinity.
This may have some connection to
the Kashaev conjecture.

For all $0 \le c \le a \le N$ and $0 \le b \le N$,
let $f^a_b(c)$ denote the probability,
when $a$ balls enter the top lane of a crossing
and $b$ balls enter the bottom,
that $c$ balls fall from the upper lane to the lower lane.
We will compute a formula for $f^a_b(c)$.
First we define some notation.

If $k$ is a non-negative integer,
we define the quantum integer
$$[k] = \frac{1 - q^k}{1 - q}.$$
Note that we are using the definition
that involves only positive exponents of $q$,
not the definition
that is symmetric under mapping $q$ to $q^{-1}$.

The $q$-factorial is $[k]! = [k][k-1]\dots[1]$.
If $0 \le r \le k$,
the Gaussian binomial is
$$\qbinom{k}{r} = \frac{[k]!}{[r]![k-r]!}.$$
These have a combinatorial interpretation as follows.

An {\em inversion} of a permutation $\phi$ of $\{1,\dots,k\}$
is a pair such that $i < j$ and $\phi(i) > \phi(j)$.
The quantum factorial $[k]!$ is the sum over $\phi$
of $q$ to the power of the number of inversions of $\phi$.

An {\em inversion} of a sequence $(\epsilon_1, \dots, \epsilon_k)$
of ones and zeros
is a pair such that $i < j$, $\epsilon_i = 1$ and $\epsilon_j = 0$.
The Gaussian binomial $\qbinom{k}{r}$
is the sum over all such sequences that have $r$ ones
of $q$ to the power of the number of inversions.

Finally, let
$$\qbinom{\infty}{r} = \frac{1}{[r]!\,(1-q)^r}.$$
If $|q| < 1$ then this agrees with the obvious limit.
For general $q$,
we can just take it as a definition.

We now give a formula for $f^a_b(c)$.
Despite appearances,
it is a polynomial function of $q$.

\begin{theorem}
If $a,b,c$ are integers and $0 \le c \le a$
then
$$f^a_b(c) = \frac
  {\qbinom{a}{c} \qbinom{K - b}{c}}
  {\qbinom{\infty}{c}} \,
  q^{(a - c)(K - b - c)}.$$
\end{theorem}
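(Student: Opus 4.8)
The plan is to compute $f^a_b(c)$ by realizing the cabled crossing as a chain of simple ``passes'' and then recognizing the answer as a product of Gaussian binomials. Realize the cabled crossing of the top block (with $a$ of its $K$ lanes occupied) over the bottom block (with $b$ of its $K$ lanes occupied) as $K$ consecutive passes, the $l$-th pass being the whole top block sliding over the single bottom lane number $l$. By the well-definedness theorem we may compute $f^a_b(c)$ using any such realization, and the $b$ passes over the originally occupied bottom lanes are inert. In an \emph{active} pass --- the top block sliding over an empty bottom lane --- the $a'$ balls currently on the top block cross over that lane one after another, each crossing it while it is still empty until (if ever) one of them falls and lands there, after which it is occupied and blocks the rest. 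Hence at most one ball falls in an active pass, with probability $1-q^{a'}$, and with probability $q^{a'}$ none does.

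Next I would organize the sum. The quantity $f^a_b(c)$ is the sum, over the $\binom{K-b}{c}$ choices of which $c$ of the $K-b$ active passes are the ones producing a fall, of the product of pass-probabilities. For each such choice this product factors as a ``falling'' part times a ``surviving'' part. The falling part is $\prod_{j=1}^{c}\bigl(1-q^{\,a-j+1}\bigr)$, since the $j$-th ball to fall does so during a pass in which $a-j+1$ balls remain on top; this is independent of the chosen arrangement, and a one-line rewriting turns it into $\qbinom{a}{c}/\qbinom{\infty}{c}$ --- equivalently $\qbinom{a}{c}\,[c]!\,(1-q)^{c}$, which exhibits the polynomiality promised in the statement. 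The surviving part is $\prod q^{\,a-d}$, one factor for each active pass that produces no fall, where $d$ is the number of balls that have already fallen before that pass.

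Finally I would evaluate the sum of the surviving parts. Encoding a choice of the $c$ ``falling'' passes among the $K-b$ active passes as a word $w$ of $c$ ones and $K-b-c$ zeros, the total exponent $\sum(a-d)$ contributed by the zero-positions equals $(K-b-c)a-\operatorname{inv}(w)$, where $\operatorname{inv}(w)$ counts pairs ``a one before a zero.'' Summing $q$ to this exponent over all such $w$ gives $q^{(K-b-c)a}\sum_w q^{-\operatorname{inv}(w)}$, and the reversal $w\mapsto\bar w$ is a bijection sending $\operatorname{inv}$ to $c(K-b-c)-\operatorname{inv}$, so by the inversion description of the Gaussian binomial $\sum_w q^{-\operatorname{inv}(w)}=q^{-c(K-b-c)}\qbinom{K-b}{c}$. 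Hence the sum of surviving parts is $q^{(a-c)(K-b-c)}\qbinom{K-b}{c}$, and multiplying by the falling part yields the stated formula. The step I expect to be the main obstacle is the first one: fixing the passes realization and verifying rigorously that an active pass produces exactly $0$ or $1$ fall with probabilities $1-q^{a'}$ and $q^{a'}$; after that everything is bookkeeping, the only subtlety being to match $\sum(a-d)$ to the inversion statistic and to use the reversal symmetry of $\qbinom{K-b}{c}$ rather than its $q\leftrightarrow q^{-1}$ symmetry, which is unavailable here because of the one-sided convention for $[k]$.
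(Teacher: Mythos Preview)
Your argument is correct, but it proceeds by a genuinely different decomposition from the paper's. The paper works directly with the $K\times K$ grid of elementary crossings: it fixes a choice of which $c$ of the $a$ upper balls fall, which $c$ of the $K-b$ empty lower lanes receive them, and a bijection $\phi$ between them, and then computes the probability of that atomic event by classifying every crossing according to whether its upper lane is \emph{always-full} or \emph{briefly-full} and its lower lane is \emph{always-empty} or \emph{briefly-empty}. The four interaction types contribute, respectively, the factors $q^{(a-c)(K-b-c)}$, the inversion weight in $\qbinom{a}{c}$, the inversion weight in $\qbinom{K-b}{c}$, and $(1-q)^c$ together with the inversion weight in $[c]!$; summing over the choices and over $\phi$ then assembles the formula directly from the combinatorial interpretations of the $q$-binomials and $q$-factorial.

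Your approach instead rewrites the cabled crossing as a sequence of $K$ ``passes'' of the whole top block over a single bottom lane, discards the $b$ inert ones, and observes that each active pass is a single Bernoulli trial with success probability $1-q^{a'}$ depending only on the current top count $a'$. This reduces the computation to a Markov chain on counts and a sum over which $c$ of the $K-b$ active passes are the falling ones. What you gain is that the fine bookkeeping of which ball lands where, and the bijection $\phi$, disappear entirely; the falling factor $\prod_{j=1}^{c}(1-q^{a-j+1})=\qbinom{a}{c}[c]!(1-q)^c$ drops out algebraically rather than combinatorially. The cost is twofold: you must justify the pass realization (this is just a choice of braid word for the cabled crossing in $B_{2K}^+$, so it is covered by the $N=1$ case of the earlier well-definedness theorem, i.e.\ the classical Burau representation), and you must do the small extra manoeuvre with the reversal bijection $w\mapsto\bar w$ to turn $\sum_w q^{-\operatorname{inv}(w)}$ into $q^{-c(K-b-c)}\qbinom{K-b}{c}$, whereas in the paper's crossing-by-crossing count the positive-exponent Gaussian binomial appears without any sign flip. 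Both routes are clean; yours is arguably more elementary once the pass picture is in hand, while the paper's makes the provenance of each of the four factors in the formula geometrically transparent.
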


\begin{proof}
Consider a crossing where
$K$ parallel lanes pass over
$K$ parallel lanes.
Now bowl $a$ balls into the upper collection of lanes
and $b$ into the lower.
We must compute the probability
that exactly $c$ balls will fall.

Fix a choice of $c$ of the upper $a$ balls,
a choice of $c$ of the lower $K - b$ empty lanes,
and a bijection $\phi$ from these balls to these empty lanes.
We compute the probability that
our chosen balls fall into our chosen lanes
according to $\phi$.

Some terminology will help us to stay organized.
The $K$ upper lanes consist of
our chosen $c$ {\em briefly-full} lanes,
$a - c$ {\em always-full} lanes,
and $K - a$ {\em irrelevant} lanes.
The $K$ lower lanes consist of
our chosen $c$ {\em briefly-empty} lanes,
$K - b - c$ {\em always-empty} lanes,
and $b$ {\em irrelevant} lanes.
The irrelevant lanes have no effect on the probability.

Consider the crossings where
an always-full lane passes over an always-empty lane.
At each such crossing,
a ball will pass over an empty lane,
contributing a factor of $q$.
Taken together,
these crossings contribute the term
  $$q^{(a - c)(K - b - c)}.$$

Consider the crossings where
an always-full lane passes over a briefly-empty lane.
Such a crossing will contribute a factor of $q$
if and only if
the briefly-empty lane is still empty,
having not yet met
its corresponding briefly-full lane.
The number of times this happens
is the number of pairs of lanes consisting of
a briefly-full lane to the left of an always-full lane.
Taken together,
these contribute the power of $q$ in the definition of
  $$\qbinom{a}{c}$$
(using the combinatorial definition,
with appropriate conventions).

Consider the crossings where
a briefly-full lane passes over an always-empty lane.
Such a crossing will contribute a factor of $q$
if and only if
the briefly-full lane is still full,
having not yet met
its corresponding briefly-empty lane.
The number of times this happens
is the number of pairs  of lanes consisting of
an always-empty lane to the left of a briefly-empty lane
Taken together,
these contribute the power of $q$ in the definition of
  $$\qbinom{K - b}{c}$$
(using the combinatorial definition,
with appropriate conventions).

Consider the crossings where
a ball falls from a briefly-full lane
to a briefly-empty lane.
Each such crossing contributes a factor of $(1 - q)$.
Taken together,
these contribute the term
  $$(1 - q)^c.$$

Finally,
consider the crossings where
a briefly-full lane passes over a briefly-empty lane
but no ball falls there.
This will contribute a factor of $q$
if and only if the briefly-full lane is still full
and the briefly-empty lane is still empty.
The number of times this happens
is the number of pairs of briefly-full lanes $i$ and $j$
such that $i$ is to the left of $j$
and $\phi(i)$ is to the left of $\phi(j)$.
This contributes the power of $q$ in the definition of
  $$[c]!$$
(using the combinatorial definition,
with appropriate conventions).

Now multiply the above contributions,
and sum over all possible choices of $c$ balls,
$c$ empty lanes,
and $\phi$.
This gives the desired formula for $f^a_b(c)$.
\end{proof}


\end{document}